\documentclass[a4paper,10pt]{article}

\usepackage{amssymb}
\usepackage{amsmath}
\usepackage{amsfonts}

\newtheorem{theorem}{Theorem}
\newtheorem{corollary}{Corollary}

\newenvironment*{proof}
    {\begin{trivlist}\item[\hspace{\labelsep}\textit{Proof.}]}
    {\hspace*{\fill}\rule{0.5em}{0.5em}\end{trivlist}}

\begin{document}

\title{Asymptotics for numbers of line segments and
lines in a square grid}

\author{Pentti Haukkanen\footnote{E-mail:
pentti.haukkanen@uta.fi}\,\, and Jorma K.
Merikoski\footnote{E-mail: jorma.merikoski@uta.fi}\\ School
of Information Sciences\\ FI-33014 University of Tampere,
Finland}

\maketitle

\begin{abstract}

We present an asymptotic formula for the number of line
segments connecting $q+1$ points of an $n\times n$ square
grid, and a sharper formula, assuming the Riemann
hypothesis. We also present asymptotic formulas for the number
of lines through at least  $q$ points and, respectively,
through exactly $q$ points of the grid. The well-known case
$q=2$ is so generalized.

\medskip
\noindent
{\it Keywords:} Square grid, integer lattice, asymptotic
formulas, the Riemann hypothesis

\medskip
\noindent
{\it AMS classification:} 05A99, 11N37, 11P21

\end{abstract}

\section{Introduction}

Given $n\ge 2$, let us consider the
grid
$$
G(n)=
\{0,\dots,n-1\}\times\{0,\dots,n-1\}.
$$
Call its points {\it gridpoints}.
Given
$q\ge 2$, we say that a line is a {\it q-gridline} if it goes
through exactly $q$ gridpoints. We write
$l_q(n)$ for the number of $q$-gridlines,
and
$l_{\ge q}(n)$ for the number of gridlines
through at least $q$ gridpoints. In other words, $l_{\ge
q}(n)$ is the sum of all
$l_p(n)$'s with $p\ge q$.

\smallskip

We also say that a line segment is a {\it
q-gridsegment} if its endpoints and exactly $q-2$ interior
points are gridpoints. Let
$s_q(n)$ denote the number of all
$q$-gridsegments. (If $q>2$, some of them may partially
overlap.) In other words, $s_q(n)$ is the number
of line segments between gridpoints visible to each other
through $q-2$ gridpoints.

\smallskip

It is well-known~\cite{HM1,Mu1,Mu2,Sl} that, for all $n\ge 2$,
$q\ge 1$,
\begin{eqnarray}
\label{sqn}
s_{q+1}(n)=\frac{1}{2}f_q(n)
\end{eqnarray}
and, for all $q\ge 2$,
\begin{eqnarray}
\label{lgqn}
l_{\ge
q}(n)=\frac{1}{2}(f_{q-1}(n)-
f_q(n))
\end{eqnarray}
and
\begin{eqnarray}
\label{lqn}
l_q(n)=\frac{1}{2}(f_{q+1}(n)
-2f_q(n)+f_{q-1}(n)).
\end{eqnarray}
Here
\begin{eqnarray*}
\label{fqn}
f_q(n)=
\sum_{\substack{-n<i,j<n\\(i,j)=q}}(n-|i|)(n-|j|)
\end{eqnarray*}
and $(i,j)$ denotes the greatest common divisor of $i$
and~$j$.

\smallskip

These ''explicit'' formulas may have theoretical value but
their practical value is small. They are computationally
tedious and do not tell much about the behaviour of the
functions. This motivates to look for recursive or asymptotic
formulas. For recursive formulas, see~\cite{HM1,Mu2} (and in
case of
$q=2$ also~\cite{EMHM,Mu1}). We study asymptotic formulas
here. The case of $q=2$ has already been
settled~\cite{EMHM}. More generally, corresponding asymptotic
formulas in an
$m\times n$ rectangular grid are obtained applying results
given in~\cite{HM2}, but they are weaker.

\smallskip

We will in Section~2 find an asymptotic formula for~$f_q(n)$
and, as corollaries, asymptotic formulas for
$s_{q+1}(n)$,
$l_{\ge q}(n)$ and~$l_q(n)$. In
Section~3, we will complete our paper with conclusions and
remarks. The Riemann hypothesis (RH) will have an
interesting role.

\section{Asymptotic formulas}

\begin{theorem}
Let $n,q\ge 1$. Then
\begin{eqnarray}
\label{fqnasymp}
f_q(n)=\frac{6n^4}{\pi^2q^2}+r(n),
\end{eqnarray}
where
\begin{eqnarray}
\label{orn}
r(n)=O(n^3\exp(-A(\log{n})^\frac{3}{5}
(\log\log{n})^{-\frac{1}{5}}))
\end{eqnarray}
for a certain~$A>0$. Assuming RH,
\begin{eqnarray}
\label{ornrh}
r(n)=O(n^{\frac{5}{2}+\varepsilon})
\end{eqnarray}
for all~$\varepsilon>0$.
\end{theorem}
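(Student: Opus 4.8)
The plan is to pass from the ``explicit'' formula to a M\"obius‑weighted sum, to extract the main term $6n^4/(\pi^2q^2)$ from $\sum_{d\ge1}\mu(d)/d^2=1/\zeta(2)$, and to bound what is left over by the Mertens function $M(y)=\sum_{d\le y}\mu(d)$, whose order of magnitude is governed by the Vinogradov--Korobov zero‑free region in general and by $M(y)\ll_\varepsilon y^{1/2+\varepsilon}$ under RH.

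Writing $i=qa$, $j=qb$, so that $(i,j)=q$ becomes $(a,b)=1$, and applying M\"obius inversion to the coprimality condition, one obtains
\[
f_q(n)=\sum_{d\ge1}\mu(d)\bigl(S_d^2-n^2\bigr),\qquad
S_d=\sum_{|k|<n/(qd)}\bigl(n-qd|k|\bigr),
\]
which is a finite sum because $S_d=n$ once $qd\ge n$. An elementary evaluation of the inner sum gives the exact identity $S_d=n^2/(qd)+qd\,\psi\!\bigl(n/(qd)\bigr)$, where $\psi(t)=\{t\}(1-\{t\})$ and $0\le\psi\le\tfrac14$ (equivalently, $S_d$ is the number of pairs $(x,x')\in\{0,\dots,n-1\}^2$ with $qd\mid x-x'$). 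Squaring and splitting off the term $n^4/(q^2d^2)$, one arrives, writing $D=\lceil n/q\rceil-1$ and $\psi_d=\psi(n/(qd))$, at
\[
f_q(n)=\frac{6n^4}{\pi^2q^2}
-n^2M(D)-\frac{n^4}{q^2}\sum_{d>D}\frac{\mu(d)}{d^2}
+2n^2\sum_{d\le D}\mu(d)\psi_d+q^2\sum_{d\le D}\mu(d)d^2\psi_d^2,
\]
so that $r(n)$ equals the last four displayed terms.

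The first two of these are routine: partial summation gives $\bigl|\sum_{d>D}\mu(d)/d^2\bigr|\ll|M(D)|D^{-2}+\int_D^\infty|M(t)|t^{-3}\,dt$, so both terms are of order $n^3\cdot|M(n)|/n$ (up to the fixed factor $q$), which yields \eqref{orn} from the zero‑free‑region estimate for $M$ and \eqref{ornrh} from the RH estimate. For the two sums involving $\psi$ I would use the absolutely and uniformly convergent Fourier expansions
\[
\psi(t)=\tfrac16-\tfrac1{\pi^2}\sum_{m\ge1}\frac{\cos 2\pi mt}{m^2},\qquad
\psi(t)^2=\tfrac1{30}-\tfrac3{\pi^4}\sum_{m\ge1}\frac{\cos 2\pi mt}{m^4};
\]
the point here is that $\psi$ and $\psi^2$ are \emph{even}, so no sine terms appear (the sine parts of $\{t\}$ and $\{t\}^2$ cancel in $\psi=\{t\}-\{t\}^2$), which is exactly what keeps the resulting series over $m$ convergent. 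After interchanging summations, everything comes down to estimating $\sum_{d\le D}\mu(d)\,d^{j}\cos(2\pi mn/(qd))$ for $j\in\{0,2\}$, uniformly in $m$, and then summing these against $m^{-2}$, respectively $m^{-4}$. I would split at $d\asymp\sqrt{mn/q}$, the threshold at which the phase $mn/(qd)$ stops varying rapidly: on $d\gg\sqrt{mn/q}$ partial summation against $M(t)$ applies, while on the short range the trivial bound $O((mn/q)^{(j+1)/2})$ is adequate because it remains summable against the Fourier coefficients. Carried through with the Vinogradov--Korobov bound for $M$, this produces error terms of the size \eqref{orn} (a factor $(\log n)^{O(1)}$ being absorbed into the constant $A$); for the sharper bound \eqref{ornrh} one instead writes $\sum_{d\le D}\mu(d)\,d^{j}\exp(2\pi imn/(qd))$ as a Perron‑type integral involving $1/\zeta(s)$ and shifts the contour to $\Re s=\tfrac12+\varepsilon$, as in the treatment of the case $q=2$ in \cite{EMHM}.

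The step I expect to be the main obstacle is precisely the last one: producing a bound for $\sum_{d\le D}\mu(d)\,d^{j}\cos(2\pi mn/(qd))$ that is strong enough, \emph{uniformly} in $m$, to survive the summation against the decaying Fourier coefficients. A crude partial‑summation estimate loses too much for $m$ of moderate size, and it is the smoothness of $\psi$ (equivalently, the vanishing of its sine coefficients) together with the Mertens‑function input that forces the error term down to the stated level.
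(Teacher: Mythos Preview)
Your M\"obius decomposition is correct and the route is genuinely different from the paper's. The paper works instead from the totient representation
\[
f_q(n{+}1)=8\sum_{i=1}^{\lfloor n/q\rfloor}(n{+}1-qi)\Bigl(n{+}1-\tfrac{q}{2}i\Bigr)\phi(i),
\]
applies Abel summation twice, and reduces the whole error to the single averaged quantity $E_R(m)=\sum_{j\le m}\bigl(\Phi(j)-\tfrac{3j^2}{\pi^2}\bigr)-\tfrac{3m^2}{2\pi^2}$, for which it simply quotes the bounds of Suryanarayana \emph{et al.}: $E_R(m)=O\bigl(m^2\exp(-A(\log m)^{3/5}(\log\log m)^{-1/5})\bigr)$ unconditionally and $E_R(m)=O(m^{3/2+\varepsilon})$ under RH. All the analytic depth is thus packaged into one citation. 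Your scheme, carried through as you describe, does appear to recover the unconditional estimate~(\ref{orn}): the long-range contribution $(mn/q)\exp\bigl(-c'(\log(mn/q))^{3/5}\cdots\bigr)$ stays summable against $m^{-2}$ after a harmless $\log n$ is absorbed into the constant.

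The genuine gap is the RH bound for $T_3:=2n^2\sum_{d\le D}\mu(d)\psi\bigl(n/(qd)\bigr)$. With $M(t)=O(t^{1/2+\varepsilon})$, the long-range partial-summation integral is $\tfrac{mn}{q}\int_{d_0}^{\infty}t^{-3/2+\varepsilon}\,dt\asymp(mn/q)\,d_0^{-1/2+\varepsilon}$; balancing against the trivial short-range bound $d_0$ gives at best $(mn/q)^{2/3+\varepsilon}$, while your choice $d_0=(mn/q)^{1/2}$ gives $(mn/q)^{3/4+\varepsilon}$. Already the $m=1$ Fourier mode then contributes $n^{2}(n/q)^{3/4+\varepsilon}=O(n^{11/4+\varepsilon})$ to $T_3$, and no splitting point does better than $O(n^{8/3+\varepsilon})$. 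The Perron alternative you invoke does not rescue this, since $\sum_d\mu(d)d^{-s}\cos\bigl(2\pi mn/(qd)\bigr)$ is not a Dirichlet series with a usable continuation past $\Re s=1$ (and \cite{EMHM} does not proceed that way either; it uses the same $E_R$ route). More structurally, the identity $\sum_{d\le N}\mu(d)\lfloor N/d\rfloor^{2}=2\Phi(N)-1$ shows that $\sum_{d\le N}\mu(d)\psi(N/d)$ carries the totient error $E_\Phi(N)$, and pushing it down to $O(N^{1/2+\varepsilon})$ requires precisely the averaged cancellation encoded in the $E_R$ bound, not merely the pointwise Mertens estimate. So as written the argument does not reach~(\ref{ornrh}); closing the gap would amount to reproving or invoking the very $E_R$ estimate that the paper cites.
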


In cases of $q=1,2$, this has already been proved~\cite{EMHM}.
The same idea works in the general case.

\begin{proof}
We divide the proof
into nine parts.

\medskip
\noindent
{\it 1. Evaluating} $f_q(n+1)$ {\it preliminarily}. Let
$n=qm+t$,
$0\le t<q$, and let
$\phi$ denote the Euler totient function.
By~\cite[Lemma~2]{EMHM},
\begin{eqnarray}
\label{f}
\nonumber
f_q(n+1)=8\sum_{i=1}^{\lfloor\frac{n}{q}\rfloor}(n+1-qi)
\big(n+1-\frac{q}{2}i\big)\phi(i)=\qquad\qquad\qquad\qquad\qquad
\\
\nonumber
8\sum_{i=1}^m(qm+t+1-qi)\big(qm+t+1-\frac{q}{2}i\big)\phi(i)=
\\
\nonumber
8\sum_{i=1}^m(qm-qi)\big(qm+t+1-\frac{q}{2}i\big)\phi(i)+\qquad
\qquad\qquad\qquad\qquad\qquad
\\
8(t+1)\sum_{i=1}^m\big(qm+t+1-\frac{q}{2}i\big)\phi(i)=
8s_1+8(t+1)s_2.
\end{eqnarray}

\medskip
\noindent
{\it 2. Evaluating} $s_1$ {\it preliminarily}. In the
partial summation formula
\begin{eqnarray}
\label{partsum}
\sum_{i=1}^Na_ib_i=\Big(\sum_{i=1}^Na_i\Big)b_N-
\sum_{i=1}^{N-1}\Big(\sum_{j=1}^ia_j\Big)(b_{i+1}-b_i),
\end{eqnarray}
substitute $N=m$, $a_i=\phi(i)$ and
$$
b_i=(qm-qi)\big(qm+t+1-\frac{q}{2}i\big);
$$
then
\begin{eqnarray*}
b_{i+1}-b_i=-\frac{3q^2}{2}m+q^2\big(i+\frac{1}{2}\big)-(t+1)q.
\end{eqnarray*}
Denoting
$$
\Phi(i)=\sum_{j=1}^i\phi(j)
$$
and
$$
c=(t+1)q-\frac{1}{2}q^2,
$$
we have
\begin{eqnarray}
\label{s1a}
s_1=\sum_{i=1}^{m-1}\Phi(i)\big(\frac{3q^2}{2}m-q^2i+c\big)=
\frac{3q^2}{2}m\sum_{i=1}^{m-1}\Phi(i)-
q^2\sum_{i=1}^{m-1}\Phi(i)i+c\sum_{i=1}^{m-1}\Phi(i).
\end{eqnarray}
Substituting $N=m-1$, $a_i=\Phi(i)$ and $b_i=i$
in~(\ref{partsum}) yields
$$
\sum_{i=1}^{m-1}\Phi(i)i=(m-1)\sum_{i=1}^{m-1}\Phi(i)-
\sum_{i=1}^{m-2}\sum_{j=1}^i\Phi(j),
$$
and so, continuing from~(\ref{s1a}),
\begin{eqnarray}
s_1=\big[\frac{3q^2}{2}m-q^2(m-1)+c\big]\sum_{i=1}^{m-1}\Phi(i)
+q^2\sum_{i=1}^{m-2}\sum_{j=1}^i\Phi(j)=\nonumber
\\
\big[\frac{q^2}{2}m+
\frac{q^2}{2}+(t+1)q\big]\sum_{i=1}^{m-1}\Phi(i)
+q^2\sum_{i=1}^{m-2}\sum_{j=1}^i\Phi(j).
\label{s1b}
\end{eqnarray}
Now, define
$$
E_\Phi(i)=\Phi(i)-\frac{3i^2}{\pi^2}.
$$
Then (\ref{s1b}) reads
\begin{eqnarray}
\label{s1c}
s_1=\big[\frac{q^2}{2}m+\frac{q^2}{2}+(t+1)q\big]
\sum_{i=1}^{m-1}\big(\frac{3i^2}{\pi^2}+E_\Phi(i)\big)
+q^2\sum_{i=1}^{m-2}\sum_{j=1}^i\big(\frac{3j^2}{\pi^2}
+E_\Phi(j)\big)\nonumber
\\
=s_{11}+s_{12}.
\end{eqnarray}

\medskip
\noindent
{\it 3. Evaluating} $s_{11}$. Let us define
$$
E_R(i)=\sum_{j=1}^iE_\Phi(j)-\frac{3i^2}{2\pi^2}.
$$
Then
\begin{eqnarray}
\label{s11}
s_{11}=\big[\frac{q^2}{2}m+\frac{q^2}{2}+(t+1)q\big]
\frac{3}{\pi^2}\frac{(m-1)m(2m-1)}{6}+\qquad\qquad\nonumber
\\
\big[\frac{q^2}{2}m+\frac{q^2}{2}+(t+1)q\big]
\big[\frac{3}{2\pi^2}(m-1)^2+E_R(m-1)\big]=\qquad\nonumber
\\
\frac{q^2}{2\pi^2}m^4+\frac{1}{2\pi^2}[q^2+2(t+1)q]m^3+O(m^2)+
O(m|E_R(m-1)|).
\end{eqnarray}

\medskip
\noindent
{\it 4. Evaluating} $s_{12}$. We have
\begin{eqnarray}
\label{s12}
\nonumber
s_{12}=q^2\sum_{i=1}^{m-2}
\Big(\frac{3}{\pi^2}\frac{i(i+1)(2i+1)}{6}+\frac{3}{2\pi^2}i^2
+E_R(i)\Big)=\qquad\qquad
\\
\nonumber
\frac{q^2}{\pi^2}\sum_{i=1}^{m-2}(i^3+3i^2+
\frac{1}{2}i)+q^2\sum_{i=1}^{m-2}E_R(i)=\qquad\qquad
\\
\nonumber
\frac{q^2}{\pi^2}\Big[\frac{(m-2)^2(m-1)^2}{4}+
3\frac{(m-2)(m-1)(2m-3)}{6}+O(m^2)\Big]+
\\
\nonumber
O(\sum_{i=1}^{m-2}|E_R(i)|)=\qquad
\\
\frac{q^2}{4\pi^2}m^4-\frac{q^2}{2\pi^2}m^3+O(m^2)+
O(\sum_{i=1}^{m-2}|E_R(i)|).
\end{eqnarray}

\medskip
\noindent
{\it 5. Evaluating} $s_1$ {\it finally.} By (\ref{s1c}),
(\ref{s11}) and (\ref{s12}),
\begin{eqnarray}
\label{s1d}
\nonumber
s_1=s_{11}+s_{12}=\frac{q^2}{2\pi^2}m^4+
\frac{1}{2\pi^2}[q^2+2(t+1)q]m^3+O(m^2)+\qquad\qquad\qquad
\\
\nonumber
O(m|E_R(m-1)|)
+\frac{q^2}{4\pi^2}m^4-\frac{q^2}{2\pi^2}m^3+O(m^2)+
O(\sum_{i=1}^{m-2}|E_R(i)|)=
\\
\frac{3q^2}{4\pi^2}m^4+\frac{q}{\pi^2}(t+1)m^3+O(m^2)+
O(m|E_R(m-1)|)+O(\sum_{i=1}^{m-2}|E_R(i)|).
\end{eqnarray}

\medskip
\noindent
{\it 6. Evaluating} $s_2$. Substituting
$N=m$, $a_i=\phi(i)$ and
$$
b_i=qm+t+1-\frac{q}{2}i
$$
in~(\ref{partsum}), we get
\begin{eqnarray}
\label{s2}
\nonumber
s_2=\Phi(m)\big(\frac{q}{2}m+t+1\big)-
\sum_{i=1}^{m-1}\Phi(i)\big(-\frac{q}{2}\big)=
\big(\frac{q}{2}m+t+1\big)\Phi(m)+
\frac{q}{2}\sum_{i=1}^{m-1}\Phi(i)
\\
\nonumber
=\big(\frac{q}{2}m+t+1\big)\big(\frac{3}{\pi^2}m^2+E_\Phi(m)\big)
+\frac{q}{2}\sum_{i=1}^{m-1}\big(\frac{3}{\pi^2}i^2+
E_\Phi(i)\big)=\qquad\qquad
\\
\nonumber
\frac{3q}{2\pi^2}m^3+O(m|E_\Phi(m)|)+O(m^2)+
\frac{q}{2}\frac{3}{\pi^2}\frac{(m-1)m(2m-1)}{6}+
\frac{q}{2}\sum_{i=1}^{m-1}E_\Phi(i)=
\\
\frac{2q}{\pi^2}m^3+\frac{q}{2}E_R(m-1)+O(m|E_\Phi(m)|)+
O(m^2).
\end{eqnarray}

\medskip
\noindent
{\it 7. Evaluating} $f_q(n+1)$ {\it finally.}
By~(\ref{f}), (\ref{s1d}) and (\ref{s2}),
\begin{eqnarray*}
f_q(n+1)=8s_1+8(t+1)s_2=\qquad\qquad\qquad\qquad\qquad\qquad\qquad\qquad
\\
\frac{6q^2}{\pi^2}m^4+\frac{8q}{\pi^2}(t+1)m^3+
O(m|E_R(m-1)|)+O(\sum_{i=1}^{m-2}|E_R(i)|)+O(m^2)+
\\
\frac{16q}{\pi^2}(t+1)m^3+4q(t+1)E_R(m-1)+O(m|E_\Phi(m)|)+
O(m^2)=\qquad
\\
\frac{6q^2}{\pi^2}m^4+\frac{24q}{\pi^2}(t+1)m^3+B(m)+O(m^2),
\end{eqnarray*}
where
\begin{eqnarray}
\label{bm}
B(m)=O(m|E_\Phi(m)|)+O(m|E_R(m-1)|)+
O(\sum_{i=1}^{m-2}|E_R(i)|).
\end{eqnarray}
Substituting
$$
m=\frac{n-t}{q},
$$
we further obtain
\begin{eqnarray}
\label{ff}
\nonumber
f_q(n+1)=\frac{6q^2}{\pi^2}\big(\frac{n-t}{q}\big)^4+
\frac{24q}{\pi^2}(t+1)\big(\frac{n-t}{q}\big)^3+
B(\frac{n-t}{q})+O(n^2)=
\\
\nonumber
\frac{6}{\pi^2q^2}n^4-\frac{24}{\pi^2q^2}tn^3+
\frac{24}{\pi^2q^2}(t+1)n^3+B(\frac{n-t}{q})+O(n^2)=\qquad
\\
\nonumber
\frac{6}{\pi^2q^2}n^4+\frac{24}{\pi^2q^2}n^3+O(n^2)+
B(\frac{n-t}{q})+O(n^2)=\qquad
\\
\frac{6}{\pi^2q^2}(n+1)^4+B(\frac{n-t}{q})+O(n^2).\qquad
\end{eqnarray}

\medskip
\noindent
{\it 8. Estimating} $B(m)$. We have~\cite[\S~I.21]{MSC}
\begin{eqnarray}
\label{Efi}
E_\Phi(m)=O(m\log{m})
\end{eqnarray}
and~\cite[Eq.~(1.11)]{SS}
\begin{eqnarray}
\label{ER}
E_R(m)=O(m^2\exp(-A(\log{m})^\frac{3}{5}
(\log\log{m})^{-\frac{1}{5}}))
\end{eqnarray}
for a certain~$A>0$. We also have
\begin{eqnarray}
\label{ERrh}
E_R(m)=O(m^{\frac{3}{2}+\varepsilon})
\end{eqnarray}
for all~$\varepsilon>0$ if~\cite[Eq.~(3.41)]{Su} (and only
if~\cite{Co}) RH holds. (For~(\ref{Efi}), see
also~\cite[Lemma~4]{EMHM}. For~(\ref{ER}) and~(\ref{ERrh}),
see also~\cite[Lemma~7]{EMHM}.)

\smallskip

By (\ref{bm}), (\ref{Efi}) and
(\ref{ER}),
\begin{eqnarray}
\label{obm}
\nonumber
B(m)=O(\max_{k\le
m}(m|E_\Phi(k)|+m|E_R(k)|))=\qquad\qquad
\qquad\qquad
\\
\nonumber
O(\max_{k\le m}(mk\log{k}+mk^2\exp(-A(\log{k})^\frac{3}{5}
(\log\log{k})^{-\frac{1}{5}})))=\qquad
\\
\nonumber
O(m^2\log{m}+m^3\exp(-A(\log{m})^\frac{3}{5}
(\log\log{m})^{-\frac{1}{5}})))=\qquad
\\
\nonumber
O(m^3\exp(-A(\log{m})^\frac{3}{5}
(\log\log{m})^{-\frac{1}{5}}))=\qquad
\\
O(n^3\exp(-A(\log{n})^\frac{3}{5}
(\log\log{n})^{-\frac{1}{5}})).\qquad
\end{eqnarray}
Assuming~RH, we proceed
similarly but use~(\ref{ERrh}) instead of~(\ref{ER}), so
obtaining
\begin{eqnarray}
\label{obmrh}
B(m)=O(n^{\frac{5}{2}+\varepsilon}).
\end{eqnarray}

\medskip
\noindent
{\it 9. Final conclusion.} Apply~(\ref{ff})
and~(\ref{obm}), and then replace $n£$ with~$n-1$. So
(\ref{fqnasymp}) with $r(n)$ in~(\ref{orn}) follows.
Under~RH, applying (\ref{obmrh}) instead of~(\ref{obm})
implies (\ref{fqnasymp}) with $r(n)$ in~(\ref{ornrh}).
\end{proof}

\begin{corollary}
Let $n,q\ge 1$. Then
$$
s_{q+1}(n)=\frac{3n^4}{\pi^2q^2}+r_1(n),
$$
where $r_1(n)$ has the $O$-estimates given in Theorem~1.
\end{corollary}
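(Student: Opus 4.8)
The plan is to simply combine the known ``explicit'' identity~(\ref{sqn}) with the asymptotic formula from Theorem~1. Recall that~(\ref{sqn}) states $s_{q+1}(n)=\tfrac12 f_q(n)$ for all $n\ge 2$, $q\ge 1$. Substituting the expansion $f_q(n)=\tfrac{6n^4}{\pi^2q^2}+r(n)$ from~(\ref{fqnasymp}) immediately gives
\[
s_{q+1}(n)=\tfrac12\Bigl(\tfrac{6n^4}{\pi^2q^2}+r(n)\Bigr)=\tfrac{3n^4}{\pi^2q^2}+\tfrac12 r(n).
\]
Thus it suffices to set $r_1(n)=\tfrac12 r(n)$.

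The only thing left to check is that $r_1(n)$ inherits the $O$-estimates of $r(n)$, i.e.\ that $r_1(n)=O(n^3\exp(-A(\log n)^{3/5}(\log\log n)^{-1/5}))$ unconditionally and $r_1(n)=O(n^{5/2+\varepsilon})$ under RH. This is trivial, since multiplying an $O$-term by the constant $\tfrac12$ does not change the $O$-estimate. Hence there is essentially no obstacle here; the corollary is a one-line consequence of Theorem~1 and~(\ref{sqn}), and the verification of the edge case $q=1$ (and small $n$) is absorbed into the statements already proved. The analogous corollaries for $l_{\ge q}(n)$ and $l_q(n)$ would follow in the same way from~(\ref{lgqn}) and~(\ref{lqn}), taking suitable linear combinations of the expansions for $f_{q-1}$, $f_q$, $f_{q+1}$.
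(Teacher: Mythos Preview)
Your proof is correct and follows exactly the paper's approach: the paper's entire proof is the single line ``Apply~(\ref{sqn}),'' and you have simply written out what that application amounts to. The additional remarks about the $O$-estimate being preserved under multiplication by $\tfrac12$ and about the analogous corollaries are accurate but not needed.
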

\begin{proof}
Apply~(\ref{sqn}).
\end{proof}

\begin{corollary}
Let $n\ge 1$, $q\ge 2$. Then
$$
l_{\ge q}(n)=\frac{3n^4}{\pi^2}\Big[\frac{1}{(q-1)^2}-
\frac{1}{q^2}\Big]+r_2(n),
$$
where $r_2(n)$ has the $O$-estimates given in Theorem~1.
\end{corollary}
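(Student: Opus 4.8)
The plan is to invoke the exact identity (\ref{lgqn}), which writes $l_{\ge q}(n) = \tfrac12\bigl(f_{q-1}(n) - f_q(n)\bigr)$, and then to insert the asymptotic expansion of Theorem~1 into each of the two summands. Since the hypothesis here is $q \ge 2$, we have $q - 1 \ge 1$, so Theorem~1 applies to $f_{q-1}(n)$ exactly as it applies to $f_q(n)$; this is precisely why the corollary demands $q\ge 2$ rather than merely $q\ge 1$ as in Corollary~1.

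Concretely, I would first write $f_{q-1}(n) = \dfrac{6n^4}{\pi^2(q-1)^2} + \rho_1(n)$ and $f_q(n) = \dfrac{6n^4}{\pi^2 q^2} + \rho_2(n)$, where each of $\rho_1$ and $\rho_2$ satisfies the unconditional bound (\ref{orn}) and, assuming RH, the sharper bound (\ref{ornrh}). Subtracting and halving then gives
\[
l_{\ge q}(n) = \frac{3n^4}{\pi^2}\Bigl[\frac{1}{(q-1)^2} - \frac{1}{q^2}\Bigr] + \tfrac12\bigl(\rho_1(n) - \rho_2(n)\bigr),
\]
so one simply sets $r_2(n) = \tfrac12\bigl(\rho_1(n) - \rho_2(n)\bigr)$. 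Because the class of functions dominated by a fixed $O$-term is closed under addition and under multiplication by constants, $r_2(n)$ inherits verbatim the estimates (\ref{orn}) and (\ref{ornrh}) stated in Theorem~1.

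I do not expect any genuine obstacle: this is a two-line deduction once Theorem~1 is available. The only points worth verifying are the legitimate applicability of Theorem~1 at index $q-1$ (handled by $q\ge 2$) and the fact that the two leading terms do not cancel — they would coincide only if $(q-1)^2 = q^2$, which never occurs for $q \ge 2$ — so that the displayed main term is indeed the true leading order of $l_{\ge q}(n)$ and dominates $r_2(n)$.
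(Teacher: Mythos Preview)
Your proposal is correct and is exactly the paper's approach: the paper's proof consists of the single instruction ``Apply~(\ref{lgqn}),'' which is precisely what you have written out in detail. Your additional remarks on why $q\ge 2$ is needed and why the main term does not vanish are valid elaborations but go beyond what the paper records.
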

\begin{proof}
Apply~(\ref{lgqn}).
\end{proof}

\begin{corollary}
Let $n\ge 1$, $q\ge 2$. Then
$$
l_q(n)=\frac{3n^4}{\pi^2}\Big[\frac{1}{(q+1)^2}-\frac{2}{q^2}+\frac{1}{(q-1)^2}\Big]+r_3(n),
$$
where $r_3(n)$ has the $O$-estimates given in Theorem~1.
\end{corollary}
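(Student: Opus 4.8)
The plan is to derive Corollary~3 directly from the explicit identity (\ref{lqn}), which writes $l_q(n)$ as the (scaled) second difference
$$
l_q(n)=\tfrac12\big(f_{q+1}(n)-2f_q(n)+f_{q-1}(n)\big),
$$
and then to insert the asymptotic formula (\ref{fqnasymp}) of Theorem~1 for each of the three terms on the right.

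First I would note that Theorem~1 applies to all three of $f_{q-1}(n)$, $f_q(n)$, $f_{q+1}(n)$: since $q\ge 2$ we have $q-1,q,q+1\ge 1$, so in each case $f_p(n)=\tfrac{6n^4}{\pi^2p^2}+r(n)$ with $r(n)$ obeying (\ref{orn}) unconditionally and (\ref{ornrh}) under RH. One should remark that the constant $A$ in (\ref{orn}) originates from the estimate (\ref{ER}) for $E_R(m)$ and is hence independent of $q$, so using Theorem~1 three times causes no trouble. Substituting and collecting the leading terms gives
$$
\tfrac12\Big(\tfrac{6n^4}{\pi^2(q+1)^2}-\tfrac{12n^4}{\pi^2q^2}+\tfrac{6n^4}{\pi^2(q-1)^2}\Big)=\frac{3n^4}{\pi^2}\Big[\frac{1}{(q+1)^2}-\frac{2}{q^2}+\frac{1}{(q-1)^2}\Big],
$$
which is precisely the claimed main term, while the error is the fixed linear combination
$$
r_3(n)=\tfrac12\big(r_{q+1}(n)-2r_q(n)+r_{q-1}(n)\big).
$$

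Finally I would observe that a bounded linear combination of functions each of which is $O(g(n))$ is again $O(g(n))$; applying this with $g(n)=n^3\exp(-A(\log n)^{3/5}(\log\log n)^{-1/5})$ unconditionally, and with $g(n)=n^{5/2+\varepsilon}$ under RH, shows that $r_3(n)$ inherits exactly the $O$-estimates of Theorem~1. I do not anticipate a genuine obstacle: the argument is a routine second-difference manipulation of Theorem~1, and the only point worth a line of comment is the uniformity in $q$ of the constant $A$ (and, relatedly, that the three $O$-constants can be absorbed into one).
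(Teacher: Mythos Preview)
Your proof is correct and follows exactly the paper's approach: the paper's proof of this corollary is simply ``Apply~(\ref{lqn})'', and your argument spells out the substitution of Theorem~1 into that second-difference identity. Your extra remark about the uniformity in~$q$ of the constant~$A$ is a welcome clarification that the paper leaves implicit.
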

\begin{proof}
Apply~(\ref{lqn}).
\end{proof}

\section{Conclusions and remarks}

Gerenalizing the asymptotic formulas presented in~\cite{EMHM},
we found asymptotic formulas for~$s_{q+1}(n)$, $l_{\ge q}(n)$
and~$l_q(n)$. It was crucial first to find an asymptotic
formula for~$f_q(n)$.

\smallskip

In all these formulas, the error term sharpens if we assume
RH. Interesting converse problems arise: Does
(\ref{ornrh}) imply~RH? If $r_1$ (respectively
$r_2$, $r_3$) satisfies~(\ref{ornrh}), does RH follow? There
are many equivalent statements of~RH,
see~\cite[Chapter~5]{BCRW} and~\cite{CNS}. Positive answers
to our questions would provide additional
elementary characterizations of~it.

\smallskip

A function~$d$, defined on~$G(n)$,
is a threshold function if it takes two
values 0 and~1 and if there is a line $a_1x_1+a_2x_2+b=0$
separating
$d^{-1}(\{0\})$ and $d^{-1}(\{1\})$ (i.e.,
$d(x_1,x_2)=0\Leftrightarrow a_1x_1+a_2x_2+b\le 0$). Let
$t(n)$ denote the number of such
functions. Alekseyev~\cite[Theorem~3]{Al} (see also \v
Zuni\'c~\cite{Zu}) proved (with different notation) that
$t(n)=f_1(n)+2$. So Theorem~1 also gives an asymptotic
formula for~$t(n)$.

\end{document}